\newtheorem{theorem}{Theorem}[section]
\newtheorem{conjecture}{Conjecture}
\newtheorem{question}[theorem]{Question}
\newtheorem{definition}[theorem]{Definition}
\newtheorem{remark}[theorem]{Remark}
\newtheorem{lemma}[theorem]{Lemma}
\newtheorem{proposition}[theorem]{Proposition}
\newcommand{\Fq}{\mathbb{F}_q}
\def\Fq{{\mathbb F}_q}
\def\AA{{\mathbb A}}
\def\FF{{\mathbb F}}
\def\PP{{\mathbb P}}
\def\C{{\mathcal C}}
\def\L{{\mathcal L}}
\def\B{{\mathcal B}}
\def\S{{\mathfrak S}}
\def\X{{\mathcal X}}
\begin{document}

\title[Number of rational points on nonsingular threefolds]{Maximum number of $\Fq$-rational points on nonsingular threefolds in $\PP^4$}

\author{Mrinmoy Datta}
\address{Department of Mathematics and Statistics, 
University of Troms{\o}, 9037 Troms{\o}, Norway.}
\email{mrinmoy.dat@gmail.com}
\thanks{The author is supported by a postdoctoral fellowship from DST-RCN grant INT/NOR/RCN/ICT/P-03/2018. }
%

\begin{abstract}   
We determine the maximum number of $\Fq$-rational points that a nonsingular threefold of degree $d$ in a projective space of dimension $4$ defined over $\Fq$ may contain. This settles a conjecture by Homma and Kim concerning the maximum number of points on a hypersurface in a projective space of even dimension in this particular case. 
\end{abstract}

\date{}
\maketitle

\section{Introduction}
For a prime power $q$, we denote by $\Fq$ a finite field with $q$ elements and by $\bar{\FF}_q$ a fixed algebraic closure of $\Fq$. Let $m, d$ be positive integers. We revisit the question of determining the maximum number of $\Fq$-rational points on a nonsingular hypersurface  defined over $\Fq$ contained in an $m$-dimensional projective space over an algebraic closure of $\Fq$. More specifically we look at the following question:

\begin{question}\label{ques}
Let $\X \subset \PP^m (\bar{\FF}_q)$ be a nonsingular hypersurface of degree $d$ defined over $\Fq$. What is the maximum number of $\Fq$-rational point that $\X$ may have?
\end{question}


From now on, we will restrict our attention to the case when $2 \le d \le q$. If $m=2$, then $\X$ is a nonsingular plane curve defined over $\Fq$ and from the famous Hasse-Weil Theorem, we know that $|\X (\Fq)| \le 1 + q + (d-1)(d-2)\sqrt{q}$ and this bound is attained by the Hermitian curve. 

Recently, Homma and Kim have addressed the Question \ref{ques} and made significant progress towards answering the same. They have proved \cite{HK} the following inequalities:
\begin{equation}\label{odd}
|\X (\Fq)| \le \theta_q \big(\frac{m-1}{2}\big)\big((d-1)q^{\frac{m-1}{2}} + 1\big), \ \ \text{if} \ m \ge 3 \ \text{and} \ m \ \text{is odd},
\end{equation}
and 
\begin{equation}\label{even}
|\X (\Fq)| \le \theta_q \big(\frac{m}{2}\big) (d-1)q ^{\frac{m}{2} - 1} + \theta_q \big(\frac{m}{2} - 1\big), \ \ \text{if} \ m  \ \text{is even},
\end{equation}
where $\theta_q (j) = 1 + q + \dots +q^{j}$ if $j \ge 0$ and $0$ if $j < 0$.
A complete list of hypersurfaces that attain the upper bound in \eqref{odd} is given in (\cite[Theorem 1.1]{HK}). 
However, it turns out that the upper bound in \eqref{even} is never attained (see \cite[Annotation]{HK}). To this end, the following conjectural bound  was proposed \cite[Conjecture]{HK}. 

\begin{conjecture}\label{con}
Suppose $m \ge 4$ is an even integer and $\X \subset \PP^m$ be a hypersurface of degree $d$ defined over $\Fq$. Then 
$$|\X (\Fq)| \le \theta_q \big(\frac{m}{2} - 1\big) \big((d-1)q^{\frac{m}{2}} + 1\big).$$
\end{conjecture}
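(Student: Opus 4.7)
The plan is to prove Conjecture~\ref{con} by induction on the even integer $m \ge 4$, with the base case $m=4$ treated directly and larger $m$ handled by reducing to lower-dimensional instances via linear sections.

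For the base case $m=4$, I would bound $|\X(\Fq)|$ for a nonsingular threefold $\X \subset \PP^4$ of degree $d$ by analysing its $\Fq$-hyperplane sections, which are surfaces of degree $d$ in $\PP^3$. Nonsingular surface sections are controlled by the odd-dimensional Homma--Kim bound (\ref{odd}) (since $m-1=3$ is odd). Singular or reducible sections require case-by-case treatment: for irreducible singular surfaces one uses the Hasse--Weil bound on generic curve sections combined with genus constraints forced by the smoothness of the ambient $\X$ at the singular locus of the section; for reducible sections one uses that a plane component intersects $\X$ in a degree-$d$ curve and exploits the fact that $\X$ being smooth of degree $d \le q$ limits how many planes may lie on it. Combining these bounds with the double-counting identity $\theta_q(3)|\X(\Fq)| = \sum_H |(\X \cap H)(\Fq)|$, and exploiting the fact that not all $\theta_q(4)$ hyperplane sections can simultaneously be extremal, should yield the conjectural bound for $m=4$.

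For the inductive step with $m \ge 6$, assume the conjecture for all even dimensions below $m$ and let $\X \subset \PP^m$ be nonsingular of degree $d \le q$. I would select a codimension-two $\Fq$-linear subspace $\Lambda$ such that $\X \cap \Lambda$ is a nonsingular hypersurface in $\Lambda \cong \PP^{m-2}$; existence follows from a Bertini-type argument over $\Fq$ valid because $d \le q$. The $q+1$ hyperplanes $H$ containing $\Lambda$ partition $\PP^m \setminus \Lambda$, yielding
\[
|\X(\Fq)| \;=\; \sum_{H \supset \Lambda} |(\X \cap H)(\Fq)| \;-\; q\,|(\X \cap \Lambda)(\Fq)|.
\]
Since $m-1$ is odd, one can apply (\ref{odd}) to each $|(\X \cap H)(\Fq)|$ for nonsingular sections, and the inductive hypothesis to bound $|(\X \cap \Lambda)(\Fq)|$ from above (or, more usefully for this subtraction, one keeps it as an unknown).

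The principal obstacle is that a naive combination of (\ref{odd}) with $|(\X \cap \Lambda)(\Fq)| \ge 0$ overshoots the conjectural bound by a term of order $\theta_q(m/2-1)\bigl((d-1)q^{m/2-1}+q\bigr)$, corresponding precisely to the unrealistic scenario in which all $q+1$ hyperplane sections simultaneously attain the odd-dimensional maximum. Closing this gap requires invoking the classification of extremal hypersurfaces for (\ref{odd}) from \cite[Theorem~1.1]{HK}: equality forces $\X \cap H$ to be of Hermitian type, and demanding $q+1$ such sections through a common $\Lambda$ imposes on $\X$ a rigid structure incompatible with its being smooth of degree $d \le q$ for $m \ge 6$. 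Quantifying this rigidity---showing, for instance, that at most one $H$-section through $\Lambda$ can be extremal, with a controlled deficiency for the rest---together with auxiliary bounds for singular $H$-sections whose singular loci are constrained by the smoothness of $\X$, constitutes the technical heart of the induction and is where I expect most of the work to lie.
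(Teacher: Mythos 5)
There is a genuine gap --- in fact the proposal is a program rather than a proof, with the decisive quantitative steps deferred at both stages. Note first that the paper itself does not prove Conjecture~\ref{con}: it establishes only the case $m=4$ (Theorem~\ref{main}, for $2\le d\le q$, $(d,q)\neq(4,4)$), so your inductive step for $m\ge 6$ would be new mathematics, and as written it rests on three unproven ingredients. (i) A Bertini-type statement over $\Fq$ producing a \emph{nonsingular} codimension-two section is not automatic from $d\le q$; the dual variety of $\X$ has degree growing like $d(d-1)^{m-1}$, so for $d$ close to $q$ one cannot rule out that every $\Fq$-rational member of the relevant family is tangent somewhere. (ii) Most of the $q+1$ sections $\X\cap H$ in the pencil through $\Lambda$ will be singular, so the bound \eqref{odd} simply does not apply to them, and no substitute is offered. (iii) The overshoot of order $\theta_q(\frac{m}{2}-1)\bigl((d-1)q^{\frac{m}{2}-1}+q\bigr)$ that you correctly identify is not cancelled by subtracting $q\,|(\X\cap\Lambda)(\Fq)|$ (that term is only of order $q^{m-2}$ versus an overshoot of order $(d-1)q^{m-2}$), and the "rigidity" argument you invoke to close it is exactly the open content of the conjecture: the extremal hypersurfaces for \eqref{odd} exist only for special degrees, and for all other $d$ no quantitative deficiency below the bound is known.

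The base case $m=4$ has the same structural gap. Your averaging identity $\theta_q(3)\,|\X(\Fq)|=\sum_H |(\X\cap H)(\Fq)|$ is fine, and in fact every $\Fq$-hyperplane section (singular or not) satisfies $|(\X\cap H)(\Fq)|\le (d-1)q^2+dq+1$, since $\X$ contains no plane by Proposition~\ref{kt} and Theorem~\ref{elementary} applies; but plugging this uniform bound into the average yields only $|\X(\Fq)|\le (d-1)q^3+dq^2+q$, which exceeds the target by $q^2-1$. Extracting a total saving of order $q^2\cdot\theta_q(3)$ from the sum --- i.e.\ showing that a positive proportion of sections fall short of the maximum by a definite amount --- is the entire difficulty, and "not all sections can be extremal" does not quantify it. The paper closes this gap by a completely different, local argument: fix $P\in\X(\Fq)$ and a line $\ell\in\L_q(P,\X)$, decompose $\PP^4(\Fq)$ into the $q^2+q+1$ planes containing $\ell$, and control how many of those planes can meet $\X$ in $d$ concurrent $\Fq$-lines. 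The key new input is Theorem~\ref{three} (a surface with no $\Fq$-plane and no cone through $P$ carries at most $d(d-1)$ lines through $P$), combined with the equality case of Serre--S{\o}rensen and Geil's second-weight bound (Theorem~\ref{O}(b)) to get an explicit deficiency of $d-2$ on each non-extremal plane. Some version of such a per-section deficiency mechanism is what your sketch would need and does not supply.
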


In Theorem \ref{main}, we prove Conjecture \ref{con} in the case when $m=4$ and $2 \le d \le q$, except when $(d, q) = (4, 4)$. More precisely, we show that if $\X \subset \PP^4$ is a nonsingular threefold  of degree $d$ defined over $\Fq$, then $|\X (\Fq)| \le (d-1)q^3 + (d-1)q^2 + q +1$.

The paper is organized as follows: In Section 2, we recall various upper bounds on the number of $\Fq$-rational points on hypersurfaces defined over $\Fq$. In Section 3, we derive an upper bound on the number of lines contained in a surface each containing a common point of intersection. Finally, in Section 4, we prove our main result. 

\section{Preliminaries}\label{sec:prel}
In this section, we recall some well-known upper bounds on the number of $\Fq$-rational points on a hypersurface defined over $\Fq$ in terms of its degree and dimension. For a positive integer $m$, we will denote by $\PP^m$ (resp. $\AA^m$)  the projective space (resp. affine space) of dimension $m$ over the field $\bar{\FF}_q$, while $\PP^m (\Fq)$ (resp. $\AA^m (\Fq)$)  will denote the set of all $\Fq$-rational points in $\PP^m$(resp. $\AA^m$).  Given a variety $\X$, we will denote by $\X(\Fq)$ the set of its $\Fq$-rational points. We recall an optimal upper bound for the number of $\Fq$-rational points on an affine hypersurface defined over $\Fq$. We also record, for ease of reference, a result by Geil \cite{G} concerning the second highest number of $\Fq$-rational points on an affine hypersurface defined over $\Fq$. 

\begin{theorem}\label{O}
Let $X \subset \AA^m$ be an affine hypersurface of degree $d$ defined over $\Fq$.  
\begin{enumerate}
\item[(a)] \cite[Thm. 6.13]{LN} if $1 \le d \le q$ then $|X(\Fq)| \le dq^{m-1}$, and 
\item[(b)]\cite[Prop. 2]{G} if $2 \le d \le q-1$ and $|X(\Fq)| < dq^{m-1}$ then $|X(\Fq)| \le dq^{m-1} - (d-1)q^{m-2}$.
\end{enumerate}
\end{theorem}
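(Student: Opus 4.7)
This theorem collects two known results: part (a) is the Ore--Schwartz--Zippel--type bound (\cite[Thm.~6.13]{LN}), and part (b) is Geil's refinement (\cite[Prop.~2]{G}). My plan is simply to invoke the cited sources; I sketch the arguments below, however, since the combinatorial technique of fibring $\AA^m$ over $\AA^{m-1}$ along a distinguished coordinate is reused throughout the paper.

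For part (a), the plan is induction on $m$. The base case $m = 1$ is the fact that a nonzero polynomial of degree $d$ over $\Fq$ has at most $d$ roots. For the inductive step, I would write the defining polynomial $f \in \Fq[x_1, \dots, x_m]$ as a polynomial in $x_m$, say $f = g_k(x_1, \dots, x_{m-1})\, x_m^k + (\text{lower order in } x_m)$ with $k = \deg_{x_m}(f) \le d$ and $\deg g_k \le d - k$. Split $\Fq^{m-1}$ according to whether $g_k$ vanishes: on $\{g_k = 0\}$, which has at most $(d-k)q^{m-2}$ rational points by induction, each fibre in $x_m$ contributes at most $q$ zeros, while on the complement each fibre contributes at most $k$. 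Summing,
\[
 |X(\Fq)| \le (d-k)q^{m-2} \cdot q + \bigl(q^{m-1} - (d-k)q^{m-2}\bigr) \cdot k = dq^{m-1} - k(d-k)q^{m-2} \le dq^{m-1},
\]
which is part (a).

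For part (b), the refined inequality $|X(\Fq)| \le dq^{m-1} - k(d-k)q^{m-2}$ extracted above is already at most $dq^{m-1} - (d-1)q^{m-2}$ for every $k \in \{1, \dots, d-1\}$, since $k(d-k) \ge d - 1$ in that range. The case $k = 0$ (where $f$ does not depend on $x_m$) is handled by recursion to the $(m-1)$-variable statement. The delicate case is $k = d$: here $g_d$ is a nonzero constant and every univariate slice $f(b_1, \dots, b_{m-1}, x_m)$ has degree exactly $d$ in $x_m$, and the plan would be to argue that unless every such slice splits into $d$ distinct linear factors over $\Fq$ (forcing $X$ to be a union of $d$ parallel hyperplanes and attaining the extremal value $dq^{m-1}$), the total deficit in the fibre counts sums to at least $(d-1)q^{m-2}$.

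The main obstacle is precisely this stability argument in the $k = d$ case: quantifying the gap between the actual and maximum fibre counts when $X$ is not already a union of parallel hyperplanes. Geil resolves it through a careful double-counting argument that exploits the rigidity of the extremal configuration; it is this refinement, and not the elementary inductive bound of part (a), that is relied upon when sharpening the Serre-type estimates in the subsequent sections.
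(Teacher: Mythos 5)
The paper gives no proof of this statement at all---it is recalled purely by citation to Lidl--Niederreiter and Geil---so your plan of invoking those sources is exactly the paper's approach, and your supplementary inductive sketch of part (a) (fibring over the last coordinate and using $k\le d\le q$ to bound the two types of fibres) is correct. You also correctly isolate the $k=d$ case of part (b) as the one point that genuinely requires Geil's argument rather than the elementary fibre count, and deferring it to the cited reference is consistent with how the paper itself treats the result.
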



The following result, concerning the maximum number of $\Fq$-rational points on a projective hypersurface defined over $\Fq$, was proved by Serre \cite{S} and independently by S{\o}rensen \cite{So}. 

\begin{theorem}[Serre-S{\o}rensen]\label{SS}
Let $\X \subset \PP^m$ be a hypersurface of degree $d$ defined over $\Fq$. If $d \le q$ then
$$|\X (\Fq)| \le \mathscr{S} (d, m) = dq^{m-1} + \theta_q (m-2).$$
Further, the bound is attained by a hypersurface $\X$ if and only if $\X$ is a union of $d$ hyperplanes defined over $\Fq$, each containing a common codimension $2$ linear subspace defined over $\Fq$.   
\end{theorem}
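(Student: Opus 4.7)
The plan is to induct on $m$. The base case $m=1$ is the elementary fact that a nonzero univariate polynomial of degree $d$ has at most $d$ roots in $\Fq$, matching $dq^{0} + \theta_q(-1) = d$. For the inductive step on $\X \subset \PP^m$, choose projective coordinates $[x_0 : x_1 : \cdots : x_m]$ so that $H := \{x_0 = 0\}$ is an $\Fq$-rational hyperplane with $H \not\subset \X$ and $[0 : 0 : \cdots : 0 : 1] \notin \X$; such a frame exists because $\X$ has only finitely many hyperplane components (at most $d$) and, as a proper subvariety of $\PP^m$, misses some $\Fq$-rational point. In the affine coordinates $t_i = x_i/x_0$ on $\AA^m = \PP^m \setminus H$, the dehomogenization $f \in \Fq[t_1, \ldots, t_m]$ of the defining equation of $\X$ then satisfies $\deg_{t_m} f = d = \deg f$. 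Partition $\X(\Fq) = U(\Fq) \sqcup V(\Fq)$, where $U := \X \setminus H$ and $V := \X \cap H \subset H \cong \PP^{m-1}$ is a projective hypersurface of degree at most $d$. Theorem \ref{O}(a) gives $|U(\Fq)| \le dq^{m-1}$ and the inductive hypothesis gives $|V(\Fq)| \le dq^{m-2} + \theta_q(m-3)$; their naive sum exceeds the target $dq^{m-1} + \theta_q(m-2)$ by $(d-1)q^{m-2}$, which I recover case-by-case.

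If $|U(\Fq)| < dq^{m-1}$, Theorem \ref{O}(b) sharpens to $|U(\Fq)| \le dq^{m-1} - (d-1)q^{m-2}$, and summing with the inductive bound on $V$ yields the target exactly. Otherwise $|U(\Fq)| = dq^{m-1}$: a slice count over the projection $\AA^m \to \AA^{m-1}$, combined with $\deg_{t_m} f = d$, forces the leading $t_m$-coefficient of $f$ (already a constant) to be some $c \in \Fq^{\times}$ and forces $f(\mathbf{a}, t_m)$ to have exactly $d$ distinct $\Fq$-rational roots for every $\mathbf{a} \in \Fq^{m-1}$. An algebraic argument leveraging $d \le q$ — ruling out nontrivial Galois monodromy of the $d$ roots across $\AA^{m-1}$ — then produces a global factorization $f = c \prod_{i=1}^d (t_m - \rho_i(\mathbf{t}'))$ with $\rho_i \in \Fq[\mathbf{t}']$ of degree $\le 1$, pairwise differing by elements of $\Fq^{\times}$. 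Projectivizing, $\X$ is the union of $d$ $\Fq$-rational hyperplanes through a common $(m-2)$-plane $L \subset H$, so $V = L$ and $|V(\Fq)| = \theta_q(m-2)$.

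The equality-case classification propagates by tracking saturated inequalities in each branch: in the first case, both Geil's bound on $U$ and the inductive bound on $V$ must be tight, whereupon the inductive pencil-of-hyperplanes structure on $V \subset H$ extends to one on $\X$; in the second case, the pencil structure is produced directly. The main obstacle is the algebraic step in the second case, namely passing from the fiberwise datum ``$f(\mathbf{a}, t_m)$ splits into $d$ distinct linear factors over $\Fq$ for every $\mathbf{a}$'' to the global factorization of $f$ over $\Fq[\mathbf{t}']$. This is precisely where the hypothesis $d \le q$ is essential — without it, Galois-compatibility obstructions disappear and ``monodromic'' extremizers distinct from pencils of hyperplanes cannot be excluded.
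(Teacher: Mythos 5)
First, a point of reference: the paper does not prove Theorem~\ref{SS} at all --- it is imported from Serre \cite{S} and S{\o}rensen \cite{So} --- so there is no internal proof to compare against. Your strategy (induction on $m$ via a hyperplane-at-infinity decomposition $\X(\Fq) = U(\Fq) \sqcup V(\Fq)$, combining Theorem~\ref{O}(a), Geil's second-weight bound, and the structure of the affine extremal case) is close in spirit to S{\o}rensen's coding-theoretic proof, and the skeleton is sound. However, the write-up has a genuine gap at exactly the point where the real content of the theorem lives. In your Case~2 you must pass from ``$f(\mathbf{a}, t_m)$ has $d$ distinct roots in $\Fq$ for every $\mathbf{a} \in \Fq^{m-1}$'' to the global factorization $f = c\prod_{i=1}^{d}(t_m - \rho_i)$ with the $\rho_i$ affine-linear and pairwise differing by nonzero constants. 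This is precisely the Delsarte--Goethals--MacWilliams classification of minimum-weight codewords of the generalized Reed--Muller code: one must rule out an $\Fq$-irreducible factor $g$ of $f$ with $\deg_{t_m} g = \deg g = e \ge 2$ whose specializations $g(\mathbf{a},\cdot)$ all split with distinct roots over $\Fq$. Your sketch replaces this with the phrase ``an algebraic argument ruling out nontrivial Galois monodromy,'' which is an assertion, not an argument. Without it you cannot improve the inductive bound $|V(\Fq)| \le dq^{m-2} + \theta_q(m-3)$ to the required $\theta_q(m-2)$, and the induction does not close.

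There are three further, smaller gaps. (i) Theorem~\ref{O}(b) is stated only for $2 \le d \le q-1$, so your Case~1 does not cover $d = q$; the needed inequality ($|U(\Fq)| < q^m$ implies $|U(\Fq)| \le q^m - (q-1)q^{m-2}$) is true but is again the minimum-weight result for $\RM (q,m)$ rather than the quoted theorem, and $d=1$ should be dispatched separately (trivially). (ii) The equality analysis in Case~1 is not a matter of ``tracking saturated inequalities'': knowing that $V = \X \cap H$ is a pencil of $d$ hyperplanes of $H$ and that $|U(\Fq)|$ equals Geil's second value does not by itself force $\X$ to be a pencil of hyperplanes --- an irreducible hypersurface can have reducible hyperplane sections --- so ``the structure on $V$ extends to $\X$'' needs an actual proof (e.g.\ the characterization of second-weight codewords, or varying the hyperplane $H$). (iii) ``A proper subvariety misses an $\Fq$-rational point'' is false in general (a degree-$(q+1)$ hypersurface can contain all of $\PP^m(\Fq)$); your frame exists only because a nonzero form of degree $d \le q$ cannot vanish on all of $\PP^m(\Fq)$, which is where the hypothesis $d \le q$ first enters and should be said explicitly.
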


We also recall a result by Homma and Kim, referred to as the \emph{elementary bound} \cite[Theorem 1.2]{ELE} concerning the number of $\Fq$-rational points on a hypersurface defined over $\Fq$ that does not contain a $\Fq$-linear component.

\begin{theorem}[Homma-Kim]\label{elementary}
Let $\X \subset \PP^m$ be a hypersurface of degree $d$ defined over $\Fq$. If $\X$ has no $\Fq$-linear component, then 
$|\X (\Fq)| \le \mathscr{E}(d, m) = (d-1)q^{m-1} + dq^{m-2} + \theta_q (m-3).$
\end{theorem}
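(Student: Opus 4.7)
The plan is to prove Theorem \ref{elementary} by induction on the dimension $m$, treating the base case $m = 2$ via projection from a point on the curve and the inductive step via a hyperplane section combined with an affine estimate.

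For the base case $m = 2$, let $\X \subset \PP^2$ be a curve of degree $d$ with no $\Fq$-linear component. If $\X(\Fq) = \emptyset$ the bound is trivial, so fix $P \in \X(\Fq)$. No $\Fq$-line through $P$ lies in $\X$, since such a line would be an $\Fq$-linear component of the curve; hence each of the $q+1$ $\Fq$-lines through $P$ meets $\X$ in at most $d$ points. Counting $P$ once and at most $d-1$ additional $\Fq$-points on each such line yields $|\X(\Fq)| \le 1 + (q+1)(d-1) = (d-1)q + d = \mathscr{E}(d,2)$.

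For the inductive step ($m \ge 3$), I would first reduce to the $\Fq$-irreducible case: if $\X = \X_1 \cup \X_2$ with $\deg \X_i = d_i$ and $d_1 + d_2 = d$, neither $\X_i$ has an $\Fq$-linear component, and a direct calculation shows $\mathscr{E}(d_1,m) + \mathscr{E}(d_2,m) \le \mathscr{E}(d,m)$ (the difference equals $q^{m-1} - \theta_q(m-3) \ge 0$). Assume henceforth that $\X$ is $\Fq$-irreducible. Fix any $\Fq$-hyperplane $H$; since $\X$ is irreducible of degree $\ge 2$, we have $H \not\subset \X$, so $Y := \X \cap H$ is a hypersurface of degree $d$ in $H \cong \PP^{m-1}$, and I write
$$|\X(\Fq)| = |Y(\Fq)| + |(\X \cap (\PP^m \setminus H))(\Fq)|.$$
For the affine complement, Theorem \ref{O}(a) gives at most $dq^{m-1}$ points; equality would force the defining polynomial on $\AA^m = \PP^m \setminus H$ to split as a product of $d$ linear forms cutting out parallel $\Fq$-hyperplanes, whose projective closures would be $\Fq$-linear components of $\X$, contradicting the hypothesis. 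Hence Theorem \ref{O}(b) applies, bounding the affine contribution by $dq^{m-1} - (d-1)q^{m-2}$.

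The main obstacle, and the technical heart of the proof, is obtaining a sharp enough bound on $|Y(\Fq)|$. If $Y$ has no $\Fq$-linear component in $H$, the inductive hypothesis gives $|Y(\Fq)| \le \mathscr{E}(d,m-1)$ directly. If $Y$ does contain $\Fq$-linear components, one decomposes $Y = L_1 + \cdots + L_s + Y'$, where each $L_i$ is an $\Fq$-hyperplane of $H$ (equivalently, an $\Fq$-codimension-$2$ subspace of $\PP^m$ contained in $\X$) and $Y'$ is a residual of degree $d-s$ with no $\Fq$-linear component in $H$; one then estimates $|\bigcup_i L_i(\Fq)|$ by inclusion--exclusion (the extremal case being when the $L_i$ share a common codimension-$3$ subspace) and $|Y'(\Fq)|$ by induction. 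The delicate step is to choose $H$ so that these estimates, combined with the affine bound, sum to exactly $\mathscr{E}(d,m)$; an alternative that likely streamlines the analysis is to work with a pencil of $\Fq$-hyperplanes through a carefully chosen codimension-$2$ subspace $\Lambda \not\subset \X$ and exploit the identity $|\X(\Fq)| = -q\,|(\X \cap \Lambda)(\Fq)| + \sum_{H \supset \Lambda} |(\X \cap H)(\Fq)|$ to trade the contribution of $|(\X \cap \Lambda)(\Fq)|$ against Serre--S\o rensen-type bounds on the hyperplane sections.
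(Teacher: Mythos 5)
This statement is not proved in the paper at all: it is quoted from Homma--Kim \cite{ELE}, so there is no internal proof to compare your attempt against, and your proposal must stand on its own. Your base case $m=2$ is correct and standard, and the reduction to the $\Fq$-irreducible case is fine (the difference $q^{m-1}-\theta_q(m-3)$ is indeed nonnegative).

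The inductive step, however, contains a genuine gap: the two estimates you propose to add do not sum to $\mathscr{E}(d,m)$ --- they sum to strictly more, for every admissible choice of $H$. The affine bound from Theorem \ref{O}(b) is $dq^{m-1}-(d-1)q^{m-2}$, and the bound on the hyperplane section, even in the most favourable case where $Y=\X\cap H$ has no $\Fq$-linear component, is $\mathscr{E}(d,m-1)=(d-1)q^{m-2}+dq^{m-3}+\theta_q(m-4)$. Their sum is $dq^{m-1}+dq^{m-3}+\theta_q(m-4)$, which exceeds the target $\mathscr{E}(d,m)=(d-1)q^{m-1}+dq^{m-2}+\theta_q(m-3)$ by exactly $q^{m-3}(q-1)(q-d+1)$, a strictly positive quantity whenever $m\ge 3$ and $d\le q$ --- and $d\le q$ is precisely the nontrivial range, since $\mathscr{E}(q+1,m)=\theta_q(m)$. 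So the hope that a clever choice of $H$ makes "these estimates sum to exactly $\mathscr{E}(d,m)$" cannot be realized with the bounds you have; even averaging over all $\Fq$-hyperplanes (so as to pick one whose section has at most $|\X(\Fq)|\,\theta_q(m-1)/\theta_q(m)$ points) still leaves a surplus of order $q^{m-2}(q-d+1)$ in the leading terms. The sub-case where $Y$ does have linear components only makes the section bound larger, and the alternative pencil identity is not carried out. There is also a secondary issue: Theorem \ref{O}(b) requires $2\le d\le q-1$, so the case $d=q$ is not covered even where you do invoke it. The missing idea is a mechanism by which the hypothesis "no $\Fq$-linear component" saves a full power of $q$ in the leading coefficient (from $dq^{m-1}$ down to $(d-1)q^{m-1}$), rather than the order-$q^{m-2}$ saving that Theorem \ref{O}(b) delivers; in \cite{ELE} this is achieved by analysing the cone of lines through a point of $\X(\Fq)$ that lie entirely on $\X$, not by slicing with a single hyperplane and adding an affine estimate.
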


Next, we recall an upper bound on the number of $\Fq$-rational points on a nonsingular hypersurface which is a consequence of Deligne's work \cite{D} towards establishing the Weil conjecture. 

\begin{theorem}[Deligne]\label{del} 
Let $\X \subset \PP^m$ be a nonsingular hypersurface of degree $d$ defined over $\Fq$. Then
$$|\X (\Fq)| \le \mathscr{W} (d, m) = \frac{d-1}{d} \big( (d-1)^m - (-1)^m \big)q^{\frac{m-1}{2}} + \theta_q (m-1).$$
\end{theorem}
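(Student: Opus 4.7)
The plan is to derive the bound directly from Grothendieck's $\ell$-adic Lefschetz trace formula combined with Deligne's purity theorem and a Chern-class computation of the Betti numbers of a smooth hypersurface. Fix a prime $\ell$ coprime to the characteristic of $\Fq$, set $n = m - 1 = \dim \X$, and write $F$ for the geometric Frobenius acting on the $\ell$-adic cohomology $H^i := H^i(\X \times_{\Fq} \bar{\FF}_q,\, \mathbb{Q}_\ell)$. The trace formula gives
$$|\X(\Fq)| \;=\; \sum_{i=0}^{2n} (-1)^i \operatorname{tr}\bigl(F \mid H^i\bigr),$$
so the task reduces to understanding the $F$-action on each $H^i$.

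First I would handle the non-middle degrees by cohomological bookkeeping. By the weak Lefschetz theorem the restriction map $H^i(\PP^m, \mathbb{Q}_\ell) \to H^i$ is an isomorphism of Galois modules for $i < n$; Poincar\'e duality on $\X$ extends the identification to all $n < i \le 2n$. Hence, for $i \ne n$, the space $H^i$ is either $0$ (when $i$ is odd) or the Tate twist $\mathbb{Q}_\ell(-i/2)$ (when $i$ is even), contributing $q^{i/2}$ to the trace. In the middle degree, weak Lefschetz yields a splitting $H^n = H^n_{\mathrm{inv}} \oplus P$, where $H^n_{\mathrm{inv}}$ is the image of $H^n(\PP^m,\mathbb{Q}_\ell)$ and $P$ is the \emph{primitive} middle cohomology. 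A direct check shows that the $F$-traces on $H^i$ for $i \ne n$, together with the trace on $H^n_{\mathrm{inv}}$, collectively reconstruct $|\PP^n(\Fq)| = \theta_q(m-1)$, independently of the parity of $n$.

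Next I would invoke Deligne's main theorem from \cite{D}: every eigenvalue of $F$ on $P$ is a Weil number of weight $n$, hence has complex absolute value $q^{n/2}$. The triangle inequality then yields
$$\bigl| (-1)^n \operatorname{tr}(F \mid P) \bigr| \;\le\; (\dim P)\, q^{(m-1)/2}.$$
It remains to evaluate $\dim P$ in closed form. Starting from the adjunction sequence $0 \to T_\X \to T_{\PP^m}|_{\X} \to \mathcal{O}_\X(d) \to 0$, a classical Chern-class calculation (due to Hirzebruch) produces the topological Euler characteristic of $\X$; subtracting the Betti numbers inherited from $\PP^n$ gives
$$\dim P \;=\; \frac{(d-1)^{m+1} + (-1)^{m+1}(d-1)}{d} \;=\; \frac{d-1}{d}\bigl((d-1)^m - (-1)^m\bigr).$$
Adding this primitive contribution to the non-primitive total $\theta_q(m-1)$ produces the stated inequality $|\X(\Fq)| \le \mathscr{W}(d,m)$.

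The main obstacle is naturally Deligne's purity theorem itself, which is the heart of the Weil conjectures and lies entirely outside any elementary counting technique; all the other ingredients—weak Lefschetz, Poincar\'e duality, and the Hirzebruch computation of Betti numbers of smooth hypersurfaces—are classical and do not rely on special geometric features of $\X$. In particular, no step of this argument uses the hypothesis $d \le q$ or any bound on $m$, so the inequality is valid for every nonsingular hypersurface $\X \subset \PP^m$, which is exactly the generality in which the theorem is stated.
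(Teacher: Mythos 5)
The paper does not prove this statement; it is quoted from Deligne's work \cite{D} and used as a black box, so there is no internal proof to compare against. Your argument is the standard derivation of that cited bound and is correct: the Lefschetz trace formula plus weak Lefschetz and Poincar\'e duality reduce everything to the primitive middle cohomology, purity bounds its trace by $(\dim P)\,q^{(m-1)/2}$, and your closed form for $\dim P$ checks out (e.g.\ it gives $(d-1)(d-2)$ for $m=2$ and $d^3-4d^2+6d-3$ for $m=3$, matching the known Betti numbers), while the non-primitive classes contribute exactly $\theta_q(m-1)$ in either parity. Your closing remark is also right that the hypotheses $d\le q$ and $m=4$ appearing elsewhere in the paper play no role here.
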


\begin{remark}\label{elt}\normalfont
The upper bound $\mathscr{E} (d, m)$ above, often referred to as the \emph{elementary bound}, deserves a few more remarks. First of all, a complete list of hypersurfaces that can attain the bound is known and can be found in \cite{Tir}. It turns out that a hypersurface of degree $d$, with no linear component defined over $\Fq$, attains the elementary bound only if $d = 2, \sqrt{q} + 1  \ \text{or} \ q+1$. More remarkably, we have $\mathscr{E} (d, m) < \mathscr{W} (d, m)$ whenever $d \ge \sqrt{q} + 2$. We refer to \cite[Proposition 4.2]{ELE} for the proof of this fact. 
\end{remark}

Let $\X \subset \PP^m$ be a hypersurface defined over $\Fq$.
We recall that the \emph{Koen Thas invariant} \cite{T} of $\X$, denoted by $k_{\X}$, is given by
$$k_{\X} := \max \{\dim L \mid L \subset \X, L \ \text{is a linear subspace of} \ \PP^m \ \text{defined over} \ \Fq\}.$$ 

We refer to \cite{Tir1} for upper bounds on the number of $\Fq$-points on hypersurfaces depending on the Koen Thas invariant.  The following proposition which is  a direct consequence of \cite[Lemma 2.1]{HK} gives an upper bound on $k_{\X}$ where $\X$ is a nonsingular projective hypersurface. 

\begin{proposition}\label{kt}
Let $\X$ be a nonsingular hypersurface in $\PP^m$. Then $k_{\X} \le \big\lfloor \frac{m-1}{2} \big\rfloor$.
\end{proposition}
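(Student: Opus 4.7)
The plan is to exploit the nonsingularity condition along every point of a linear subspace $L \subset \X$ to show that a large linear subspace forces too many independent equations.

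First I would choose homogeneous coordinates $x_0,\ldots,x_m$ on $\PP^m$ so that $L = \V(x_{k+1},\ldots,x_m)$, where $k = \dim L$. Writing the defining polynomial of $\X$ as $F$, the inclusion $L \subset \X$ forces $F$ to lie in the ideal $(x_{k+1},\ldots,x_m)$, so I can write
$$F = \sum_{i=k+1}^{m} x_i\, G_i(x_0,\ldots,x_m),$$
with each $G_i$ homogeneous of degree $d-1$. Set $\widetilde{G}_i(x_0,\ldots,x_k) := G_i(x_0,\ldots,x_k,0,\ldots,0)$, the restriction of $G_i$ to $L$.

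Next I would compute the partial derivatives of $F$ at an arbitrary point $p = [a_0:\cdots:a_k:0:\cdots:0] \in L$. For $0 \le j \le k$, every term in $\partial F/\partial x_j$ retains a factor $x_i$ with $i > k$ and therefore vanishes at $p$. For $k+1 \le j \le m$, a direct computation gives $(\partial F/\partial x_j)(p) = \widetilde{G}_j(a_0,\ldots,a_k)$. Since $\X$ is nonsingular along $L$ (as a subvariety of $\PP^m$ over $\overline{\FF}_q$), at every geometric point of $L$ at least one partial derivative must be nonzero, so the $m-k$ polynomials $\widetilde{G}_{k+1},\ldots,\widetilde{G}_m$ on $\PP^k$ have no common zero in $\PP^k(\overline{\FF}_q)$.

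Finally, I would invoke the projective dimension theorem: the common vanishing locus of $m-k$ homogeneous polynomials in $\PP^k$ has dimension at least $k - (m-k) = 2k - m$. Emptiness of this intersection forces $2k - m < 0$, i.e., $k \le \lfloor (m-1)/2 \rfloor$, which is the desired bound on $k_{\X}$. The only subtle step is the last one; here I simply need that an intersection of $s$ hypersurfaces in $\PP^k$ is empty only if $s \ge k+1$, which is a standard consequence of Krull's Hauptidealsatz applied to the affine cone. All other steps are bookkeeping with coordinates and partial derivatives, so this is expected to be the main (but very mild) obstacle.
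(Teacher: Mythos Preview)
Your argument is correct. The coordinate choice, the expansion $F=\sum_{i>k} x_i G_i$, the computation of the gradient along $L$, and the appeal to the projective dimension theorem (equivalently, Krull's Hauptidealsatz on the affine cone) are all valid; the only minor point worth making explicit is that some of the $\widetilde G_i$ could be identically zero, but this only helps you, since discarding them leaves at most $m-k$ genuine hypersurfaces in $\PP^k$ whose intersection must still be empty.

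As for comparison with the paper: the paper does not actually supply a proof of this proposition but simply records it as ``a direct consequence of \cite[Lemma 2.1]{HK}''. Your self-contained argument is precisely the classical one (going back at least to the study of Fano schemes of lines and planes on hypersurfaces) and is, in essence, what lies behind the cited lemma of Homma and Kim. So there is no discrepancy of method to discuss --- you have filled in what the paper leaves to a reference.
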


We will also use an upper bound on the number of $\Fq$-rational points on a plane curve defined over $\Fq$ that does not contain a line defined over $\Fq$. In a series of three papers \cite{HK1, HK2, HK3}, Homma and Kim proved the following result.

\begin{theorem}\label{szik}
Let $\C$ be a plane curve of degree $d$ defined over $\Fq$ not containing any lines defined over $\Fq$. Then 
$$|\C (\Fq)| \le (d - 1) q + 1,$$
except for the curve defined over $\FF_4$ given by the vanishing set of the quartic polynomial
$$(X + Y + Z)^4 + (XY + YZ + ZX)^2 + XYZ (X+ Y + Z).$$ 
\end{theorem}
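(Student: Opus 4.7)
The plan is to prove the bound $|\C(\Fq)|\le (d-1)q+1$ by induction on $d$, reducing to the case of an absolutely irreducible curve and handling the exceptional $\FF_4$-quartic by a direct count. The base case $d=2$ is immediate: a conic without an $\Fq$-line is either a smooth conic with $q+1$ points or a pair of Galois-conjugate $\Fqt$-lines meeting at a single $\Fq$-point, and in either case $|\C(\Fq)|\le q+1=(d-1)q+1$.

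First I would dispatch the reducible case. If $\C=\C_1\cup\C_2$ with $\C_i$ defined over $\Fq$ of degree $d_i\ge 1$, then neither $\C_i$ contains an $\Fq$-line, so by induction
\[
|\C(\Fq)|\le |\C_1(\Fq)|+|\C_2(\Fq)|\le (d_1-1)q+1+(d_2-1)q+1=(d-2)q+2\le (d-1)q+1.
\]
When $\C$ is $\Fq$-irreducible but geometrically reducible, its absolutely irreducible components form a single Galois orbit and the $\Fq$-points lie in the Galois-fixed intersection scheme, which a Bezout estimate bounds well below $(d-1)q+1$.

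For the absolutely irreducible case with $d\ge 3$, suppose first that $\C$ has an $\Fq$-rational singular point $P$ of multiplicity $m_P\ge 2$. Then each of the $q+1$ $\Fq$-lines through $P$ meets $\C$ at $P$ with intersection multiplicity at least $m_P$, hence in at most $d-m_P$ further $\Fq$-points, so
\[
|\C(\Fq)|\le 1+(q+1)(d-m_P)\le (d-2)q+d-1,
\]
which is at most $(d-1)q+1$ because $d\le q$. Hence one may assume every point of $\C(\Fq)$ is smooth. In the nonsingular subcase I would apply the St\"ohr--Voloch bound to $\C\subset\PP^2$, separating the classical Frobenius-order situation from the non-classical one (orders $(0,1,p)$ with $p<q$), and check where each falls below $(d-1)q+1$.

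The hardest step is the borderline analysis for irreducible curves whose $\Fq$-points are all smooth but for which St\"ohr--Voloch is not sharp. Here my strategy is to study the \emph{extremal} $\Fq$-lines, i.e.\ those meeting $\C$ in the maximum of $d$ distinct $\Fq$-points; a double-counting of incidences $(P,\ell)$ with $P\in\C(\Fq)\cap\ell$, combined with the observation that the tangent line at a smooth $\Fq$-point cannot be extremal (its intersection at $P$ has multiplicity $\ge 2$), should yield a sufficiently sharp inequality once the contribution of inflection points is controlled. I expect this extremal analysis to be the main obstacle, since it is exactly the regime in which the exceptional $\FF_4$-quartic obstructs the general bound, and the proof would conclude by a direct verification that the displayed polynomial has exactly $14$ rational points over $\FF_4$, one more than $(d-1)q+1=13$.
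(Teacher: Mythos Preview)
The paper does not prove this theorem; it quotes it as the main result of the three Homma--Kim papers \cite{HK1,HK2,HK3}. So there is no in-paper proof to compare against, and the question is whether your sketch stands on its own.

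Your reductions to the absolutely irreducible case with every $\Fq$-point smooth are essentially sound (modulo the small omission that in the reducible step one of the factors could itself be the exceptional $\FF_4$-quartic, which must be checked separately). The genuine gap is in the last step. In the Frobenius-classical case the St\"ohr--Voloch bound for a plane curve of degree $d$ is $\tfrac{1}{2}d(d+q-1)$; a short computation shows this is at most $(d-1)q+1$ precisely when $d\le q-1$, and it \emph{fails} for $d=q$ (and $d=q+1$), which is exactly the range containing the exceptional curve. You acknowledge this by deferring to an ``extremal analysis'' of lines meeting $\C$ in $d$ rational points, but you do not carry it out, and this is not a routine tail: the case $d\in\{q,q+1\}$ together with the Frobenius non-classical situations is the entire substance of Homma and Kim's three papers, requiring a delicate incidence count and a case analysis that ultimately isolates the $\FF_4$-quartic. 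As written, your proposal is an outline that correctly identifies the hard kernel but does not supply the argument for it; to complete it you would have to reproduce (or replace) the core of \cite{HK1,HK2,HK3}.
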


It is worth noting that the bound in Theorem \ref{szik} is better than that given by Theorem \ref{elementary} in this case. We conclude this section with a few observations that will  be helpful in the sequel. 

\begin{remark}\label{obs}
\normalfont

\

\begin{enumerate}
\item[(a)] Fix a positive integer $d \le q$. Let $\X \subset \PP^m$ be a hypersurface of degree $d$ defined over $\Fq$. Suppose, $\X$ is given by the vanishing set of a homogeneous polynomial $F \in \Fq[x_0, \dots, x_m]$ with $\deg F = d$. If $\L$ is a linear subspace of $\PP^m$ such that $\L \not\subset \X$, then $F|_{\L} \neq 0$. In particular, if $\X = V(F) \subset \PP^3$ is a surface defined over $\Fq$ and there is a plane $\Pi \subset \PP^3$ with $\Pi \not\subset \X$, then $F|_{\Pi} \neq 0$. Furthermore, the plane curve $\X \cap \Pi$ may contain at most $d$ lines. 


\item[(b)] Let $\X \subset \PP^m$ be a nonsingular hypersurface containing a line $\ell$. If $P \in \ell$ then $\ell \subset T_P (\X)$, where $T_P(\X)$ is the tangent hyperplane to $\X$ at $P$.  
\end{enumerate}
\end{remark}

\section{An upper bound on number of lines passing through a point on a surface}\label{th}
In this section, we prove a fundamental result concerning the number of lines passing through a given point on a surface. This result will turn out to be instrumental in proving the main Theorem of this paper. 
\begin{theorem}\label{three}
Let $Y \subset \PP^3$ be a surface of degree $d$ defined over $\Fq$ and $P \in Y (\Fq)$. 
Then one of the following holds: 
\begin{enumerate}
\item[(a)] $Y$  contains a plane defined over $\Fq$, 
\item[(b)] $Y$ contains a cone over a plane curve defined over $\Fq$ with center at $P$,
\item[(c)] $\#\{ \ell \subset \PP^3 \mid \ell \ \text{is a line such that} \ P \in \ell \subset Y\} \le d(d-1)$. 
\end{enumerate}
\end{theorem}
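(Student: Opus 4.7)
Choose coordinates so that $P=(1:0:0:0)$ and write the defining equation of $Y$ as
$$F=\sum_{i=0}^{d} x_0^{d-i}\, F_i(x_1,x_2,x_3),$$
where each $F_i$ is homogeneous of degree $i$ in $\mathbf{x}:=(x_1,x_2,x_3)$; the condition $P\in Y$ forces $F_0=0$. Parametrizing an arbitrary line through $P$ as $(s:ta_1:ta_2:ta_3)$ shows that such a line lies in $Y$ if and only if $F_i(a_1,a_2,a_3)=0$ for every $1\le i\le d$. Hence the lines through $P$ contained in $Y$ correspond bijectively to the points of the scheme $\Lambda:=V(F_1,\dots,F_d)\subseteq\PP^2$ (defined over $\Fq$), and it suffices to bound $|\Lambda(\bar{\FF}_q)|$.

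First I would clear the trivial configurations. If $F_d=0$ then $x_0\mid F$, so $Y$ contains the plane $\{x_0=0\}$ defined over $\Fq$, giving (a). Otherwise set $k:=\min\{i\ge 1:F_i\neq 0\}$. If $k=d$ then $F=F_d(\mathbf{x})$ and $Y$ is itself the cone with vertex $P$ over the plane curve $V(F_d)\subset\PP^2$, which is (b). So assume $k\le d-1$. The polynomial $G_{\ast}:=\gcd(F_k,\dots,F_d)\in\Fq[\mathbf{x}]$ divides $F$; if $\deg G_{\ast}\ge 1$ then $Y\supset V(G_{\ast})$, a cone with vertex $P$ over a plane curve defined over $\Fq$, returning us to (b). Hence we may assume $\gcd(F_k,\dots,F_d)=1$.

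Now set $G:=\gcd(F_k,F_d)$ of degree $h$. When $h=0$, Bezout gives $|V(F_k,F_d)|\le kd\le(d-1)d$, which is already (c). When $h\ge 1$, write $F_k=G\tilde F_k$ and $F_d=G\tilde F_d$ with $\gcd(\tilde F_k,\tilde F_d)=1$, and factor $G=\prod_i G_i^{e_i}$ into $\Fq$-irreducibles. If some $G_i$ divided every nonzero $F_j$, then $G_i$ would divide $F$ and produce a cone inside $Y$, returning us to (b); so for every $i$ there is an index $j(i)$ with $k<j(i)<d$, $F_{j(i)}\neq 0$, and $G_i\nmid F_{j(i)}$.

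Using $V(F_k)\cap V(F_d)=V(G)\cup V(\tilde F_k,\tilde F_d)$ and $V(G)=\bigcup_i V(G_i)$, and applying Bezout to each coprime pair $(G_i,F_{j(i)})$ and to $(\tilde F_k,\tilde F_d)$, I obtain
\begin{align*}
|\Lambda(\bar{\FF}_q)| &\le \sum_i (\deg G_i)\, j(i)\;+\;(k-h)(d-h) \\
&\le (d-1)\sum_i \deg G_i\;+\;(k-h)(d-h) \\
&\le h(d-1)\;+\;(k-h)(d-h),
\end{align*}
where $\sum_i\deg G_i\le\sum_i e_i\deg G_i=h$. A short algebraic check gives $h(d-1)+(k-h)(d-h)=kd-h(k+1-h)\le kd-k\le(d-1)d$, establishing (c). The main obstacle is precisely this last case: when $\gcd(F_k,F_d)$ has positive degree but is not common to all $F_j$, a naive Bezout on the pair $(F_k,F_d)$ is blocked by $V(G)$, and one must split $V(F_k,F_d)$ into $V(G)$ and $V(\tilde F_k,\tilde F_d)$ and then argue, per irreducible component $V(G_i)$ of $V(G)$, that some $F_{j(i)}$ cuts it properly.
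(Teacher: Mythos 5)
Your proof is correct and follows essentially the same route as the paper: identify lines through $P$ with points of $V(F_1,\dots,F_d)\subset\PP^2$, eliminate cases (a) and (b) via $F_d\neq 0$, some nonzero $F_i$ with $i<d$, and coprimality of the $F_i$, then bound the remaining locus by applying Bezout to coprime pairs, one irreducible factor at a time. The only difference is bookkeeping: the paper factors $F_d$ completely and pairs each irreducible factor with a coprime $F_{j_i}$ to reach $d(d-1)$ directly, whereas you pass through $V(F_k,F_d)$ and split off $\gcd(F_k,F_d)$, which is a slightly longer computation arriving at the same (indeed marginally sharper) bound.
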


\begin{proof}
We assume that the conditions (a) and (b) are not satisfied. Let $\Pi \subset \PP^3$ be a plane defined over $\Fq$ that does not pass through $P$. By a suitable linear change of coordinate systems over $\Fq$, we may assume that $P = [1: 0 : 0 : 0]$ and $\Pi = V(x_0)$. We may further assume that $Y = V(F)$, where $F \in \Fq [x_0, x_1, x_2, x_3]$ and $\deg F = d$.  
Write $$F(x_0, x_1, x_2, x_3) = x_0^{d-1} F_1 (x_1, x_2, x_3) + \dots + F_d (x_1, x_2, x_3),$$
where $F_i \in \Fq [x_1, x_2, x_3]$ are homogeneous polynomials of degree $i$ for $i = 1, \dots, d$.
 First note that $F_d \neq 0$, for otherwise $\Pi \subset Y$. Furthermore, $(F_1, \dots, F_{d-1}) \neq (0, \dots, 0)$, since the condition $F_1= \dots = F_{d-1} =0$ implies that $Y$ is a cone over the plane curve $V(x_0, F_d)$, a contradiction to our assumption. Moreover, the polynomials $F_1, \dots, F_d$ are coprime. For otherwise, there exists a polynomial $G \in \Fq[x_1, x_2, x_3]$ such that $G \mid F_i$ for $i = 1, \dots, d$, so $Y$ contains a cone over the plane curve given by $V (x_0, G)$, which  violates our assumption. 

Define $\L^Y (P) := \{\ell \mid \ell \ \text{is a line with} \  P \in \ell \subset Y\}$. 
There is a natural bijection $S \longleftrightarrow \L^Y (P)$, 
where  $S := \big(\bigcup_{\ell \in \L^Y(P)} \ell\big) \cap \Pi$.
Hence,  it is enough to show that $|S| \le d(d-1)$.

We claim that $S = V(F_1, \dots, F_d, x_0)$. Let $Q = [0 : a_1 : a_2 : a_3] \in S$. Since the line $\ell'$ joining $P$ and $Q$ is contained in $Y$, we see that $[t: a_1 : a_2 : a_3] \in Y$ for all $t \in \overline{\FF}_q$. In particular, $F (t, a_1, a_2, a_3) = 0$ for all  $t \in \bar{\FF}_q$. Since $F (T, a_1, a_2, a_3)$ is a polynomial in $T$ of degree at most $d-1$, we must have
$F_i (a_1, \dots, a_m) = 0$ for all $i= 1, \dots, d-1$. Thus, $S \subset V(F_1, \dots, F_d, x_0)$. The converse is trivial. 

We write $F_d = G_1 \cdots G_r$, where $r \ge 1$ and $G_1, \dots, G_r \in \Fq[x_1, x_2, x_3]$ are irreducible polynomials. Since $F_1, \dots, F_r$ are coprime, for each $i = 1, \dots, r$, there exists $1 \le j_i \le d-1$ such that $G_i$ and $F_{j_i}$ are coprime. By Bezout's theorem $|V(x_0, G_i, F_{j_i})| \le \deg G_i \deg F_{j_i} \le \deg G_i (d-1)$. Hence,
\begin{align*}
|S| \le |V(x_0, F_1, \dots, F_d)| &\le \sum_{i=1}^r |V (x_0, F_1, \dots, F_{d-1}, G_i)|  \\
& \le \sum_{i=1}^r |V(x_0, G_i, F_{j_i})|  \\
&\le \sum_{i=1}^r \deg G_i (d-1) = d (d-1).
\end{align*}
This completes the proof. 
\end{proof}

For the purpose of this paper, we have proved the above theorem for the field $\Fq$. However, it is worth mentioning that the proof goes through when $\Fq$ is replaced by an arbitrary field $k$.

\begin{remark}\normalfont
If $d \le q$, then the upper bound of Theorem \ref{three} can not be improved. To see this, we consider the point $P = [1 : 0 : 0 : 0]$ and $Y \subset \PP^3$, the surface given by the polynomial 
$$F = x_0  \prod_{i=1}^{d-1}(x_1 - a_i x_2) - x_3 \prod_{j=1}^{d-1} (x_2 - a_j x_3),$$
where $a_1, \dots, a_{d-1}$ are distinct non-zero elements of $\Fq$. It is clear that $Y$ does not satisfy the conditions (a) and (b) in Theorem \ref{three} and that $\X$ admits exactly $d (d-1)$ lines containing $P$.
\end{remark}

\section{Main result}\label{mt}
Let $d$ be a positive integer with $2 \le d \le q$ and $\X \subset \PP^{4}$ be a nonsingular threefold of degree $d$ defined over $\Fq$. Given a point $P \in \X (\Fq)$, we denote by $\L(P, \X)$ (resp. $\L_q (P, \X)$) the set of lines (resp. the set of lines defined over $\Fq$) $\ell$  satisfying $P \in \ell \subset \X$. Also, for $P \in \X$, we denote by $T_P (\X)$, the tangent hyperplane to $\X$ at $P$. For a line $\ell \subset \PP^4 (\Fq)$, we denote by $\B (\ell)$, the set of all planes $\Pi \subset \PP^4$ defined over $\Fq$ such that $\ell \subset \Pi$.  
If $\ell \subset \PP^4$ is a line defined over $\Fq$, then
$$|\B (\ell)| = q^2 + q + 1  \ \ \ \text{and}  \ \ \ \PP^4 (\Fq) =  \bigcup_{\Pi \in \B (\ell)} \Pi (\Fq).$$
The following proposition, thanks to the well known classification of quadric hypersurfaces \cite{HT} over finite fields, settles the case where $d=2$.

\begin{proposition}\label{quad}
If $d=2$, then $|\X (\Fq)| = q^3 + q^2 + q + 1.$
\end{proposition}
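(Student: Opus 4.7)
My plan is to invoke the classification of nonsingular quadric hypersurfaces over finite fields \cite{HT}: up to $\Fq$-projective equivalence there is a unique nonsingular quadric threefold in $\PP^4$ defined over $\Fq$, namely the parabolic quadric $Q(4,q)$. Consequently $|\X(\Fq)|$ depends only on $q$, not on the particular choice of $\X$, and it suffices to evaluate the count on one convenient explicit model.

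For the model I would take $\X = V(x_0^2 + x_1 x_2 + x_3 x_4)$ in odd characteristic (a standard normal form in which $x_0^2$ is replaced by a suitable nondegenerate binary form handles the characteristic-two case). I would then split $\X(\Fq)$ using the hyperplane $H = V(x_0)$. The points at infinity form the surface $V(x_1 x_2 + x_3 x_4) \subset \PP^3$, a hyperbolic quadric which is the Segre image of $\PP^1 \times \PP^1 \hookrightarrow \PP^3$ and therefore contributes $(q+1)^2$ rational points.

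For the affine part, set $x_0 = 1$ and stratify by the value $v := x_1 x_2 \in \Fq$. The number of pairs $(a,b) \in \Fq^2$ with $ab = v$ equals $2q - 1$ if $v = 0$ and $q - 1$ otherwise, and the analogous count holds for the equation $x_3 x_4 = -1 - v$. A short summation over $v \in \Fq$ (three cases: $v = 0$, $v = -1$, and the remaining $q-2$ values) collapses to $q^3 - q$ affine rational points. Adding the two contributions gives $|\X(\Fq)| = (q+1)^2 + (q^3 - q) = q^3 + q^2 + q + 1$, as claimed.

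No step presents a genuine obstacle: once the classification from \cite{HT} is quoted, everything reduces to elementary bookkeeping. The only minor nuisance is verifying that the same count goes through in characteristic two, where one must work with the appropriate normal form for a nondegenerate quadratic form in five variables; the stratification by $v$ and the final arithmetic are unaffected.
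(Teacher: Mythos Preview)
Your approach is correct and essentially matches the paper's: both invoke the classification of nonsingular quadrics in $\PP^4$ from \cite{HT} to reduce to the parabolic quadric, the paper simply citing the known value $q^3+q^2+q+1$ while you additionally supply an explicit count on a model. Incidentally, your model $V(x_0^2+x_1x_2+x_3x_4)$ is already nonsingular in characteristic two (the Jacobian criterion forces $x_1=x_2=x_3=x_4=0$, whence $x_0^2=0$), so the stratification and arithmetic go through uniformly and the ``minor nuisance'' you anticipate does not actually arise.
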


\begin{proof}
It is known that (see, for example, \cite[Chapter 1]{HT}) any non-singular quadric threefold in $\PP^4$ defined over $\Fq$ is a parabolic quadric upto a projective linear transformation which has exactly $q^3 + q^2 + q + 1$ rational points. 
\end{proof}

 Next, we derive an upper bound for the number of $\Fq$-rational points on $\X$ that lies outside the tangent hyperplane to $\X$ at a given point $P$ on $\X$.

\begin{lemma}\label{affine}
For any $P \in \X(\Fq)$, we have $|\X(\Fq) \cap T_P (\X)^C| \le (d-1)q^3$.  
\end{lemma}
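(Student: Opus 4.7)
The approach I would take is a pencil-of-lines argument with apex at $P$. Since $P \in T_P(\X)$, the set $T_P(\X)^C(\Fq)$ consists of exactly $q^4$ affine points, none of which is $P$, and every such point lies on a unique line through $P$ that is not contained in $T_P(\X)$. I would first count these lines. The $\Fq$-rational lines through $P$ in $\PP^4$ are parametrized by $\PP^3(\Fq)$, of which $q^3+q^2+q+1$ exist; those contained in $T_P(\X) \cong \PP^3$ correspond to $\PP^2(\Fq)$, giving $q^2+q+1$. Subtracting, there are precisely $q^3$ lines through $P$ defined over $\Fq$ not contained in $T_P(\X)$, and each meets $T_P(\X)^C$ in exactly $q$ rational points, consistently partitioning the $q^4$ points of $T_P(\X)^C(\Fq)$.

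Next I would bound the contribution of each such line $L$ to $|\X(\Fq) \cap T_P(\X)^C|$. The decisive observation is Remark \ref{obs}(b): any line through $P$ contained in $\X$ must be contained in $T_P(\X)$. Since by choice $L \not\subset T_P(\X)$, this forces $L \not\subset \X$, so Bezout's theorem yields $|L \cap \X| \le d$. The point $P$ itself lies in $L \cap \X$ but not in $T_P(\X)^C$, and the remaining points of $L \cap \X$ lie in $T_P(\X)^C$ because $L$ meets the hyperplane $T_P(\X)$ only at $P$. Therefore $|L \cap \X(\Fq) \cap T_P(\X)^C| \le d-1$.

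Summing over the $q^3$ disjoint pieces gives $|\X(\Fq) \cap T_P(\X)^C| \le (d-1)q^3$, which is exactly the asserted inequality. The argument is essentially straightforward once the pencil is set up; the only step that is not purely combinatorial is the dichotomy that a line through $P$ contained in $\X$ must lie in $T_P(\X)$, and this is handled for free by Remark \ref{obs}(b), a consequence of the nonsingularity hypothesis on $\X$. Thus I expect no real obstacle in carrying this out.
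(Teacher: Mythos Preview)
Your proposal is correct and follows essentially the same pencil-of-lines argument as the paper: partition $T_P(\X)^C(\Fq)$ by the $q^3$ lines through $P$ not lying in $T_P(\X)$, note each such line is not contained in $\X$ (which you justify via Remark~\ref{obs}(b), while the paper simply asserts it), and bound each contribution by $d-1$ via B\'ezout. The only difference is that you spell out a couple of steps the paper leaves implicit.
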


\begin{proof}
Let $S$ denote the set of all lines defined over $\Fq$ that pass through $P$ and is not contained in $T_P(\X)$.  It is easy to show that $T_P (\X)^C (\Fq) = \displaystyle{\bigsqcup_{\ell \in S}} (\ell (\Fq) \setminus \{P\})$, which implies that $|\X(\Fq) \cap T_P(X)^C| = \sum_{\ell \in S} |\X(\Fq) \cap  (\ell \setminus \{P\})|$. Clearly, for any $\ell \in S$, we have $\ell \not\subset \X$ implying $$|\X \cap \ell| \le d , \ \ \ \text{and consequently,} \ \ \ |\X \cap (\ell \setminus \{P\}) | \le d-1.$$
Since $|S| = q^3$, we have $|\X(\Fq) \cap T_P(X)^C| \le (d-1)q^3$.
\end{proof}

The above Lemma applies immediately if we can find an $\Fq$-rational point on $\X$ such that  $\L_q(P, \X) = \emptyset$. The following Lemma shows that the conjecture is true in such a case. 

\begin{lemma}\label{noline}
Let $P \in \X(\Fq)$. If $\L_q (P, \X) = \emptyset$, then 
$|\X (\Fq)| < (d-1)q^3 + (d-1)q^2 + q + 1.$
\end{lemma}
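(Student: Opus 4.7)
The plan is to split $|\X(\Fq)|$ across the tangent hyperplane $T_P(\X)$ and bound each piece by counting along $\Fq$-lines through $P$. By Lemma \ref{affine} applied to $P$, the part of $\X(\Fq)$ lying outside $T_P(\X)$ satisfies $|\X(\Fq) \cap T_P(\X)^C| \le (d-1)q^3$. It thus suffices to bound $|\X(\Fq) \cap T_P(\X)|$ by roughly $(d-2)q^2 + (d-2)q + O(1)$ and then compare with the target $(d-1)q^3 + (d-1)q^2 + q + 1$.

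For the tangent-hyperplane part I would count via $\Fq$-lines through $P$ lying in $T_P(\X)$. Every $\Fq$-rational point of $T_P(\X) \setminus \{P\}$ lies on a unique such line, and the set of these lines has cardinality $\theta_q(2) = q^2 + q + 1$ (they correspond to points of $\PP^2(\Fq)$ in the quotient $T_P(\X)/P$). Fix such a line $\ell$. The hypothesis $\L_q(P,\X) = \emptyset$ ensures $\ell \not\subset \X$. Since $\ell$ lies in the tangent hyperplane to $\X$ at the nonsingular point $P$, the intersection multiplicity satisfies $I_P(\ell, \X) \ge 2$; hence Bezout gives $|\ell \cap \X(\Fq) \setminus \{P\}| \le d - 2$. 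Summing over the $q^2 + q + 1$ lines yields
\[
|\X(\Fq) \cap T_P(\X)| \;\le\; 1 + (q^2 + q + 1)(d-2) \;=\; (d-2)q^2 + (d-2)q + (d-1).
\]

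Combining with Lemma \ref{affine} gives $|\X(\Fq)| \le (d-1)q^3 + (d-2)q^2 + (d-2)q + (d-1)$, and the conclusion reduces to checking
\[
(d-1)q^2 + q + 1 \;-\; \big[(d-2)q^2 + (d-2)q + (d-1)\big] \;=\; q^2 - (d-3)q - (d-2) \;>\; 0.
\]
For $d \le q$ this quantity is at least $q^2 - (q-3)q - (q-2) = 2q+2 > 0$, so the strict inequality follows. The only subtle point is the assertion $I_P(\ell, \X) \ge 2$ for $\ell \subset T_P(\X)$, which I do not expect to be a real obstacle: it is the defining property of the tangent hyperplane at a smooth point. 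Otherwise the argument is a clean two-step count, and the final arithmetic comparison uses only $d \le q$.
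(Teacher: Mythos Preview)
Your proof is correct and follows essentially the same approach as the paper: both split $\X(\Fq)$ along $T_P(\X)$, invoke Lemma~\ref{affine} for the complement, and then count over the $q^2+q+1$ lines through $P$ in $T_P(\X)$ using that each meets $\X$ with multiplicity $\ge 2$ at $P$ (the paper phrases this as ``$P$ is a singular point of $\X\cap T_P(\X)$''). Your final arithmetic verification is slightly more explicit than the paper's, but the argument is the same.
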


\begin{proof}
In view of Lemma \ref{affine} and the fact $|\X(\Fq)| = |\X (\Fq) \cap T_P (\X)| + |\X(\Fq) \cap T_P (\X)^C|$, it is enough to show that $|\X (\Fq) \cap T_P (\X)| < (d-1)q^2 + q + 1$. Since $P$ is a singular point of $\X \cap T_P (\X)$, for each line $\ell$ with the property that  $P \in \ell \subset T_P (\X)$ 
we have $|\X \cap (\ell \setminus \{P\})| \le d-2$.
Since there are $q^2 + q + 1$ lines defined over $\Fq$ in $T_P (\X)$ that contain $P$, we have 
$$|\X (\Fq) \cap T_P (\X)| \le 1 + (d-2) (q^2 + q + 1) =  (d-2)q^2 + (d -2)q + d - 1 < (d-1)q^2 + q + 1.$$
This completes the proof.  
\end{proof}

\begin{definition}\normalfont
Let $P \in \X (\Fq)$ and  $\ell \in \L_q (P, \X)$. For each $Q \in \ell(\Fq)$ we define,  
$$\Omega_{\ell} (Q) := \{ \Pi \in \B(\ell) \mid \X \cap \Pi = \ell \cup \ell_1 \cup \dots \cup \ell_{d-1}, \ell_i \in \L_q (Q, \X) \} \ \ \mathrm{and} \ \ \Omega (\ell) := \displaystyle{\bigcup_{Q \in \ell (\Fq)}} \Omega_{\ell} (Q).$$
\end{definition}

\begin{lemma}\label{nocone}
Let $P \in \X(\Fq)$ and suppose that there exists $\ell \in \L_q(P, \X)$ such that for any $Q \in \ell (\Fq)$ the surface $\X \cap T_Q (\X)$ does not contain a cone over a plane curve defined over $\Fq$ with center at $Q$. Then 
$$|\Omega_{\ell} (Q)| \le d-1 \ \text{for all} \ Q \in \ell (\Fq), \ \text{and  consequently,}   \ |\Omega (\ell)| \le (d-1)(q+1).$$
\end{lemma}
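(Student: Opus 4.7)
The main assertion is $|\Omega_\ell(Q)| \le d-1$ for each $Q \in \ell(\Fq)$; the bound $|\Omega(\ell)| \le (d-1)(q+1)$ will then follow from the union bound, since $|\ell(\Fq)| = q+1$. I will choose coordinates on $\PP^4$ defined over $\Fq$ such that $Q = [1:0:0:0:0]$, $\ell = V(x_2, x_3, x_4)$, and $T_Q(\X) = V(x_4)$. Writing a defining equation of $\X$ as $F = \sum_{i=1}^d x_0^{d-i} F_i(x_1, x_2, x_3, x_4)$ with $F_i$ homogeneous of degree $i$, the hypotheses $Q \in \X$, $T_Q(\X) = V(x_4)$, and $\ell \subset \X$ force $F_1 = x_4$ (after rescaling) and $F_i(x_1, 0, 0, 0) = 0$ for every $i \ge 1$. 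The pencil of planes in $T_Q(\X)$ through $\ell$ is parameterized by $[\mu:\nu] \in \PP^1$ via $\Pi_{[\mu:\nu]} = V(x_4, \mu x_3 - \nu x_2)$. Using coordinates $[s_0:s_1:t]$ on $\Pi_{[\mu:\nu]}$ with $(x_2, x_3) = t(\mu, \nu)$, so that $\ell = V(t)$, a direct substitution yields $F|_{\Pi_{[\mu:\nu]}} = t \cdot G_{[\mu:\nu]}(s_0, s_1, t)$, where $G_{[\mu:\nu]} = \sum_{i=2}^{d} s_0^{d-i} g_i(s_1, t; \mu, \nu)$ and each $g_i$ is a polynomial of degree $i-1$ in $s_1, t$ whose coefficients are binary forms $P_{i,a,j}(\mu, \nu)$ of degree $j$ in $(\mu, \nu)$, with $a + j = i$ and $j \ge 1$.

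Since $Q = [1:0:0]$ in these coordinates and lines through $Q$ inside $\Pi_{[\mu:\nu]}$ are cut out by linear forms in $s_1, t$ alone, the membership $\Pi_{[\mu:\nu]} \in \Omega_\ell(Q)$ implies that $G_{[\mu:\nu]}$ is independent of $s_0$, which in turn forces $g_i \equiv 0$ as a polynomial in $s_1, t$ for every $i \in \{2, \ldots, d-1\}$. Equivalently, the simultaneous vanishing at $[\mu:\nu]$ of all the binary forms $P_{i,a,j}(\mu, \nu)$ with $2 \le i \le d-1$ is a necessary condition for $\Pi_{[\mu:\nu]}$ to belong to $\Omega_\ell(Q)$.

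The crux of the proof --- and I expect this to be the main obstacle --- is to show that at least one of these $P_{i,a,j}$ is not identically zero as a polynomial in $\mu, \nu$. Assume for contradiction that they all vanish identically. Unwinding definitions, this implies $F_i(x_1, x_2, x_3, 0) \equiv 0$ for every $i = 1, \ldots, d-1$, so that $F|_{x_4 = 0} = F_d(x_1, x_2, x_3, 0)$ is independent of $x_0$. Consequently $Y := \X \cap T_Q(\X)$ is itself a cone with apex $Q$ over the plane curve $V(F_d(x_1, x_2, x_3, 0)) \subset V(x_0, x_4)$, all defined over $\Fq$, contradicting the standing hypothesis of the lemma. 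Hence some $P_{i_0, a_0, j_0}$ is a nonzero binary form of degree $j_0 \le d-1$, so it vanishes on at most $d-1$ points of $\PP^1(\Fq)$. This yields $|\Omega_\ell(Q)| \le d-1$, and the lemma follows.
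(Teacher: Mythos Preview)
Your argument is correct, modulo one small omission: you should justify that every $\Pi \in \Omega_\ell(Q)$ actually lies in $T_Q(\X)$, so that your $\PP^1$-parameterization covers all of $\Omega_\ell(Q)$. This is easy---each $\ell_i$ passes through $Q$ and lies in $\X$, hence in $T_Q(\X)$ by Remark~\ref{obs}(b); if some $\ell_i \neq \ell$ then $\ell$ and $\ell_i$ span $\Pi \subset T_Q(\X)$, while if all $\ell_i = \ell$ then $\X \cap \Pi = d\ell$, which for $d \ge 2$ is singular at $Q$, forcing $\Pi \subset T_Q(\X)$ anyway.

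The paper's proof is quite different and much shorter: it invokes Theorem~\ref{three}, the main result of Section~\ref{th}, which says that a degree-$d$ surface in $\PP^3$ containing neither a plane nor a cone with center $Q$ admits at most $d(d-1)$ lines through $Q$. Applying this to $Y = \X \cap T_Q(\X)$ (the plane case being ruled out by Proposition~\ref{kt}, the cone case by hypothesis), and observing that distinct planes in $\Omega_\ell(Q)$ contribute disjoint sets of $d-1$ lines through $Q$ other than $\ell$, one gets $(d-1)\,|\Omega_\ell(Q)| + 1 \le d(d-1)$. Your route bypasses Theorem~\ref{three} entirely, producing instead a single nonzero binary form of degree at most $d-1$ whose zero locus contains the parameter points of $\Omega_\ell(Q)$. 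This is self-contained and arguably more direct for the purpose at hand, whereas the paper's route isolates the geometric content into a reusable statement about surfaces in $\PP^3$ that is also used elsewhere (cf.\ Remark~\ref{3}).
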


\begin{proof}
Let $Q \in \ell (\Fq)$ and  $t_Q = |\Omega_{\ell} (Q)|$. This implies that there are $t_Q$ planes $\Pi_1, \dots, \Pi_{t_Q}$ each defined over $\Fq$ containing $d-1$ lines other than $\ell$ defined over $\Fq$ passing through $Q$. Then $|\L_q (Q, \X)| \ge (d-1)t_Q + 1$. If $t_Q \ge d$, then  
$|\L_q (Q, \X)| \ge  d(d-1) + 1 >  d(d-1)$.
This contradicts Theorem \ref{three}. The second inequality follows since $|\Omega (\ell)| \le \sum_{P \in \ell (\Fq)} |\Omega_{\ell} (P)| \le (d-1)(q+1).$ 
\end{proof}

\begin{remark}\label{3} \normalfont
Let $P \in \X(\Fq)$ and suppose that there exists $\ell \in \L_q(P, \X)$ such that for any $Q \in \ell (\Fq)$ the surface $\X \cap T_Q (\X)$ does not contain a cone over a plane curve defined over $\Fq$ with center at $Q$. We define,
$$\S (\ell) := \{\Pi \in \B (\ell) \mid \X \cap \Pi \ \text{is a union of} \ d \ \text{lines defined over} \ \Fq\}.$$
Using Theorem \ref{three} and a similar argument as in the proof of Lemma \ref{nocone} it is easy to show that $|\S (\ell)| \le d (q+1) -1.$ In the special case when $d = 3$ and $\Pi \in \B (\ell) \setminus \S (\ell),$ then $\X \cap (\Pi \setminus \ell)$ is a plane curve of degree $2$ defined over $\Fq$. Furthermore, $\X \cap (\Pi \setminus \ell)$ does not contain a line defined over $\Fq$. Using Theorem \ref{szik}, we may conclude that $|\X \cap (\Pi \setminus \ell)| \le q +1$.
\end{remark}

\begin{lemma}
For $\Pi \in \B(\ell)$, we have 
$$|\X(\Fq) \cap (\Pi \setminus \ell)|  =(d-1)q \ \text{if} \ \Pi \in \Omega (\ell) \ \ \text{and}  \ \ |\X(\Fq) \cap (\Pi \setminus \ell)|\le (d-1)q - (d-2)  \ \text{if} \ \Pi \in \B(\ell) \setminus \Omega (\ell)
.$$
\end{lemma}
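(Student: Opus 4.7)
The plan is to split on whether $\Pi \in \Omega(\ell)$ or not. If $\Pi \in \Omega(\ell)$, then by definition $\X \cap \Pi = \ell \cup \ell_1 \cup \cdots \cup \ell_{d-1}$ with $\ell_1, \dots, \ell_{d-1}$ distinct $\Fq$-lines through a common $Q \in \ell(\Fq)$. Any two of these $d$ coplanar lines through $Q$ meet only at $Q$, so the $\Fq$-points of $\X \cap \Pi$ off $\ell$ decompose as the disjoint union $\bigsqcup_{i=1}^{d-1}(\ell_i(\Fq) \setminus \{Q\})$ of $d-1$ sets of size $q$, yielding exactly $(d-1)q$.

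For $\Pi \notin \Omega(\ell)$, Proposition \ref{kt} ensures $k_{\X} \le 1$, so $\Pi \not\subset \X$ and $C := \X \cap \Pi$ is a plane curve of degree $d$ in $\Pi \cong \PP^2$. Write $C = k\ell + D$ where $k \ge 1$ is the multiplicity of $\ell$ in $C$ and $\ell \not\subset D$; then $|\X(\Fq) \cap (\Pi \setminus \ell)|$ equals the number of $\Fq$-rational points of the affine curve $D \cap \A^2$ of degree $d - k$, where $\A^2 := \Pi \setminus \ell$. If $k \ge 2$, Theorem \ref{O}(a) gives $|D(\Fq) \cap \A^2| \le (d-2)q \le (d-1)q - (d-2)$, the last step using $q \ge d-2$. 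If $k = 1$, so $\deg D = d - 1$, Theorem \ref{O}(a) yields only the weaker bound $(d-1)q$; to upgrade via Theorem \ref{O}(b) (Geil's refinement) to the desired $(d-1)q - (d-2)$, I must strictly rule out equality in Theorem \ref{O}(a).

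The plan for ruling out equality is to prove, by induction on $e$, that if $E \subset \Pi$ is a plane curve of degree $e$ with $1 \le e \le q - 1$, $\ell \not\subset E$, and $|E(\Fq) \cap \A^2| = eq$, then $E$ is a union of $e$ distinct $\Fq$-lines concurrent at a single point of $\ell(\Fq)$. Applied to $D$ of degree $d-1$, this claim produces some $R' \in \ell(\Fq)$ through which all components of $C = \ell \cup D$ pass, forcing $\Pi \in \Omega_\ell(R') \subset \Omega(\ell)$ and contradicting the hypothesis. The base $e = 1$ is clear (any non-$\Fq$-line in $\Pi$ has at most one $\Fq$-point, which is less than $q \ge 2$). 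For the inductive step $e \ge 2$, equality in Theorem \ref{O}(a) forces $E$ to be reduced (otherwise its defining polynomial has a repeated factor and the set-theoretic degree drops), and Theorem \ref{szik} then forces $E$ to contain an $\Fq$-line $L$ (otherwise $|E(\Fq)| \le (e-1)q + 1 < eq$; the Homma--Kim exceptional case $(e,q) = (4,4)$ is excluded by $e \le q-1$). Writing $E = L \cup E'$ with $\deg E' = e - 1$ and $\ell \not\subset E'$, the union bound $|E(\Fq) \setminus \ell(\Fq)| \le q + (e-1)q$ forces $|E'(\Fq) \cap \A^2| = (e-1)q$ (so by induction $E' = L_1 \cup \cdots \cup L_{e-1}$ concurrent at some $R' \in \ell(\Fq)$) and $L \cap E' \subset \ell$. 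Since each point $L \cap L_i$ must then lie in $L \cap \ell = \{R_L\}$ and in $L_i \cap \ell = \{R'\}$, one obtains $R_L = R'$, so $L$ also passes through $R'$, completing the induction. The main obstacle is this concurrency bookkeeping step, which relies delicately on the fact that distinct lines in $\PP^2$ meet in exactly one point, together with the induction's conclusion that all lines of $E'$ share a single point of $\ell$.
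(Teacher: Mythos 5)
Your argument is correct, and its overall skeleton matches the paper's: an exact count of $(d-1)q$ on the $d$ concurrent $\Fq$-lines when $\Pi \in \Omega(\ell)$, and, when $\Pi \notin \Omega(\ell)$, a strict inequality $|\X(\Fq)\cap(\Pi\setminus\ell)| < (d-1)q$ that is then upgraded by Geil's refinement, Theorem \ref{O}(b). The genuine difference lies in how you obtain the strict inequality. The paper gets it in one line from the equality part of the Serre--S{\o}rensen theorem (Theorem \ref{SS}) applied to the plane curve $\X\cap\Pi$ of degree $d$: attaining $dq+1$ points would force $\X\cap\Pi$ to be $d$ concurrent $\Fq$-lines, hence $\Pi\in\Omega(\ell)$. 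You instead re-derive the relevant classification from scratch: an induction on the degree showing that an affine plane curve of degree $e\le q-1$ with exactly $eq$ rational points and no component equal to the line at infinity is a union of $e$ distinct $\Fq$-lines concurrent at an $\Fq$-point of $\ell$, using reducedness plus Theorem \ref{szik} to extract an $\Fq$-line at each stage. Your induction and the concurrency bookkeeping are sound (two distinct $\Fq$-lines meet in a single $\Fq$-point, which must lie on $\ell$ by the equality in the union bound), and your explicit case $k\ge 2$ for the multiplicity of $\ell$ in $\X\cap\Pi$ treats a degeneration the paper passes over silently; but note that your route is not more elementary, since Theorem \ref{szik} is a far deeper input than the equality statement of Theorem \ref{SS}, which is already available in Section \ref{sec:prel}. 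Two small points to tidy up: when $d=2$ Theorem \ref{O}(b) is not applicable (it needs degree at least $2$), but then the desired bound $(d-1)q-(d-2)=q$ is already Theorem \ref{O}(a), so nothing is lost; and in the $\Omega(\ell)$ case the asserted equality $(d-1)q$ tacitly assumes the lines $\ell,\ell_1,\dots,\ell_{d-1}$ are distinct, an assumption the paper makes implicitly as well, and in any degenerate reading only the inequality $\le (d-1)q$ is needed downstream.
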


\begin{proof}
If $\Pi \in \Omega(\ell)$ then we see from direct computation that $|\X (\Fq) \cap \Pi| = dq + 1$. We note that $|\X (\Fq) \cap (\Pi \setminus \ell)| = |\X (\Fq) \cap \Pi| - |\ell| = (d-1)q$, which proves the first assertion.
To prove the second assertion, choose $\Pi \in \B(\ell) \setminus \Omega (\ell)$. It follows readily that $\X \cap \Pi$ is not a union of $d$ lines with a point in common. From the second part of Theorem \ref{SS} we have $|\X (\Fq)| < dq + 1$. Moreover, $\X \cap (\Pi \setminus \ell) $ is an affine curve of degree $d - 1$ defined over $\Fq$ with
$|\X(\Fq) \cap (\Pi \setminus \ell)| = |\X(\Fq) \cap \Pi| - |\ell (\Fq)| < dq +1 - (q+1) = (d-1)q$. Since $d- 1 \le q-1$,  Theorem \ref{O} (b) applies, and we obtain $|\X(\Fq) \cap (\Pi \setminus \ell)| \le (d-1)q - (d-2)$. 
\end{proof}

\begin{theorem}\label{main}
Fix a positive integer $d$ with $2 \le d \le q$. Let $\X \subset \PP^4$ be a nonsingular threefold of degree $d$ defined over $\Fq$. If $(d, q) \neq (4, 4)$ we have,
$$|\X(\Fq)| \le (d-1)q^3 + (d-1)q^2 + q +1.$$
Moreover, the bound is attained by a nonsingular threefold $\X$ of degree $d$ only if there exists a point $P \in \X (\Fq)$ such that $\X \cap T_P (\X)$ is a cone, with center at $P$, over a plane curve $\C$ of degree $d$ defined over $\Fq$ that does not contain a line defined over $\Fq$ and $|\C (\Fq)| = (d-1)q + 1.$ 
\end{theorem}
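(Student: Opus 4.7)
The plan is to split on whether some $P \in \X(\Fq)$ has the hyperplane section $Y_P := \X \cap T_P(\X)$ containing an $\Fq$-rational cone with apex $P$. The case $d = 2$ is Proposition~\ref{quad}, so I assume $d \ge 3$. A structural fact I will use throughout: Proposition~\ref{kt} gives $k_{\X} \le 1$, so $\X$ contains no $\Fq$-plane, hence neither does any $Y_P$.

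\textbf{Cone case.} Suppose some $P \in \X(\Fq)$ has $Y_P$ containing a cone $K$ of degree $e$ with apex $P$ over a plane curve $\C$ defined over $\Fq$. I first argue that $\C$ has no $\Fq$-line, since such a line together with $P$ would span an $\Fq$-plane in $K \subset \X$, contradicting $k_\X \le 1$. Theorem~\ref{szik} then yields $|\C(\Fq)| \le (e-1)q + 1$: the Szik exception occurs only for $(e,q) = (4,4)$, and combined with $e \le d \le q$ this would force the excluded case $(d,q) = (4,4)$. Consequently $|K(\Fq)| = 1 + q|\C(\Fq)| \le (e-1)q^2 + q + 1$. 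If $e = d$ then $Y_P = K$ and $|Y_P(\Fq)| \le (d-1)q^2 + q + 1$. If $e < d$, decompose $Y_P = K \cup R$ with $\deg R = d - e$; ruling out $d - e = 1$ (else $R$ is an $\Fq$-plane in $\X$) forces $d - e \ge 2$, and since $\X$ has no $\Fq$-plane component the elementary bound (Theorem~\ref{elementary}) gives $|R(\Fq)| \le (d-e-1)q^2 + (d-e)q + 1$. Summing, $|Y_P(\Fq)| \le (d-2)q^2 + (d-e+1)q + 2$, and $q^2 - (d-e)q - 1 \ge 2q - 1 > 0$ (using $d - e \le q - 2$) yields $|Y_P(\Fq)| \le (d-1)q^2 + q + 1$ strictly. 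In either sub-case, combining with Lemma~\ref{affine} delivers $|\X(\Fq)| \le (d-1)q^3 + (d-1)q^2 + q + 1$; equality compels $e = d$ and $|\C(\Fq)| = (d-1)q + 1$, matching the characterization.

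\textbf{Non-cone case.} Otherwise no $P \in \X(\Fq)$ has $Y_P$ containing such a cone. If some $P$ has $\L_q(P, \X) = \emptyset$, Lemma~\ref{noline} closes things with strict inequality; otherwise fix any $P \in \X(\Fq)$ and $\ell \in \L_q(P, \X)$, and apply Lemma~\ref{nocone} to get $|\Omega(\ell)| \le (d-1)(q+1)$. The decomposition
\[|\X(\Fq)| = (q+1) + \sum_{\Pi \in \B(\ell)} |\X(\Fq) \cap (\Pi \setminus \ell)|,\]
combined with the final lemma of the excerpt (bound $(d-1)q$ on $\Omega(\ell)$, else $(d-1)q - (d-2)$) and maximized over $|\Omega(\ell)|$, simplifies to
\[|\X(\Fq)| \le (d-1)q^3 + q^2 + (d-1)q + (q+1)\left(1 + (d-2)^2\right).\]
A direct algebraic check shows this is $\le (d-1)q^3 + (d-1)q^2 + q + 1$ in the entire range $d \ge 4$, $d \le q$, $(d,q) \ne (4,4)$ — and strictly so except exactly at $(d,q) = (4,4)$, which is excluded. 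For $d = 3$ the general estimate narrowly fails at $q = 3$, so I will instead invoke Remark~\ref{3}: $|\S(\ell)| \le 3(q+1) - 1$, and for $\Pi \in \B(\ell) \setminus \S(\ell)$ the residual conic carries no $\Fq$-line, contributing at most $q + 1$ points via Theorem~\ref{szik}. The refined sum becomes $q^3 + 5q^2 + q < 2q^3 + 2q^2 + q + 1$ for all $q \ge 3$. Both variants of the non-cone case yield strict inequality, so equality in the theorem must originate from the cone case.

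\textbf{Expected main obstacle.} I expect the most delicate step to be the proper-cone sub-case $e < d$: ruling out $d - e = 1$ via $k_\X \le 1$, and then verifying that the Szik bound on $\C$ combined with the elementary bound on $R$ still fits under $(d-1)q^2 + q + 1$ with room to spare. A secondary concern is the small-parameter arithmetic near $(3,3)$ and $(4,4)$, where the default Case-II estimate saturates or narrowly fails and one has to rely on either the Remark~\ref{3} refinement or on the excluded-case hypothesis of the theorem itself.
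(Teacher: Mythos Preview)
Your overall architecture matches the paper's proof almost exactly: same cone/non-cone dichotomy, same use of Lemma~\ref{affine} plus Theorem~\ref{szik} and Theorem~\ref{elementary} in the cone case, same $\B(\ell)$-decomposition governed by Lemma~\ref{nocone} in the non-cone case. The cone sub-case $e<d$ is handled correctly (and slightly more carefully than in the paper, since you make the exclusion of $d-e=1$ explicit).

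There is, however, a genuine gap at $(d,q)=(3,3)$. Your ``refined sum'' is miscomputed: with $|\S(\ell)|\le 3q+2$, the per-plane bound $2q$ on $\S(\ell)$, and $q+1$ on $\B(\ell)\setminus\S(\ell)$, one gets
\[
|\X(\Fq)|\ \le\ (q+1)+(3q+2)\cdot 2q+(q^2-2q-1)(q+1)\ =\ q^3+5q^2+2q,
\]
not $q^3+5q^2+q$. At $q=3$ this is $78$, while the target $2q^3+2q^2+q+1$ equals $76$, so the two-way $\S(\ell)$ split does not close the case. The point is that on $\S(\ell)$ you cannot do better than $2q$ uniformly, because $\Omega(\ell)\subset\S(\ell)$ and planes in $\Omega(\ell)$ genuinely contribute $2q$. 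The paper fixes this by keeping both pieces of information simultaneously, via the three-way split
\[
\B(\ell)=\Omega(\ell)\ \sqcup\ \bigl(\S(\ell)\setminus\Omega(\ell)\bigr)\ \sqcup\ \bigl(\B(\ell)\setminus\S(\ell)\bigr),
\]
with per-plane bounds $2q$, $2q-1$, and $q+1$ respectively (the middle bound coming from Theorem~\ref{O}(b), since those planes lie outside $\Omega(\ell)$). With $r=|\B(\ell)\setminus\Omega(\ell)|\ge 5$ and $s=|\B(\ell)\setminus\S(\ell)|\ge 2$ at $q=3$, this yields $|\X(\Fq)|\le 82-r-s\le 75<76$. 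Incorporating this three-way refinement is the only repair your argument needs.
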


\begin{proof}
If $d = 2$, then Proposition \ref{quad} applies and proves the assertion. Thus, we may assume that $d \ge 3$. If $\X(\Fq) = \emptyset$ there is nothing to prove. Choose $P \in \X (\Fq)$. If $\L_q (P, \X) = \emptyset$ then the Theorem is proved using Lemma \ref{noline}. Thus, we may assume that $\L_q(P, \X) \neq \emptyset$. Let $\ell \in \L_q (P, \X)$. We divide the proof into various cases. 

\emph{Case 1: There exists  $Q \in \ell (\Fq)$ such that $\X \cap T_Q (\X)$ contains a cone over a plane curve $\C$ defined over $\Fq$ with center at $Q$.} Suppose that there exists a plane $\Pi$ defined over $\Fq$ such that $\C \subset \Pi$. Let $\deg \C = d_1$.   We note that $\C$ does not contain a line defined over $\Fq$, for otherwise $\X \cap T_Q(\X)$ would contain a plane defined over $\Fq$, contradicting Proposition \ref{kt}. Since $d_1 \le d$ and $(d, q) \neq (4, 4)$, we have $(d_1, q) \neq (4, 4)$. From Theorem \ref{szik} we have $|\C (\Fq)| \le (d_1 - 1)q + 1$ and consequently $|\C^* (\Fq)| \le (d_1-1)q^2 + q + 1$, where $\C^*$ denotes the cone over $\C$ with center at $P$. If $d_1 = d$, then $\X \cap T_Q (\X) = \C^*$. We have $|\X(\Fq) \cap T_Q (\X)| \le (d-1)q^2 + q + 1$ and the assertion is proved using Lemma \ref{affine}. If $d_1 < d$, then there exists a surface $Z$ of degree at most $d-d_1$ such that $\X \cap T_Q (\X) = \C^* \cup Z$.  Note that $Z$ does not contain any plane defined over $\Fq$.  Using Theorem \ref{elementary} we have $|Z(\Fq)| \le (d - d_1 - 1)q^2 + (d- d_1)q + 1$. Hence,
$$|\X (\Fq) \cap T_Q (\X)| \le |\C^* (\Fq)| + |Z (\Fq)| \le (d - 1) q^2 + q + 1 + \big(-q^2 + (d - d_1)q + 1 \big).$$
Since $d_1 \ge 1$, we deduce that $q^2 > (d-d_1)q + 1$. This shows that $|\X (\Fq) \cap T_Q (\X)| <  (d - 1) q^2 + q + 1$. The assertion of the theorem is now proved using Lemma \ref{affine}.

\emph{Case 2: For each  $Q \in \ell (\Fq)$ the corresponding surface $\X \cap T_Q (\X)$ does not contain a cone over plane curve defined over $\Fq$ with center at $Q$.} Let $\Omega (\ell)$ and $\S (\ell)$ be as above. We first assume that $(d, q) \neq (3, 3).$
 Following the notations above, let $r = |\B(\ell) \setminus \Omega (\ell)|$. From Lemma \ref{nocone} we have $r \ge (q^2 + q + 1) - (d-1)(q+1)$. Also, from Theorem \ref{O}, we derive that 
$|\X (\Fq)\cap (\Pi \setminus \ell)|  = (d-1)q$ if $\Pi \in \Omega(\ell)$ and $|\X (\Fq)\cap (\Pi \setminus \ell)|  \le (d-1)q - (d-2)$ if $\Pi \in \B(\ell) \setminus \Omega(\ell)$. Hence
\begin{align*}
|\X(\Fq)| &= |\ell (\Fq)| + \sum_{\Pi \in \B(\ell)} |\X (\Fq) \cap (\Pi \setminus \ell)| \\
&= |\ell (\Fq)| + \sum_{\Pi \in \Omega(\ell)} |\X (\Fq)\cap (\Pi \setminus \ell)| + \sum_{\Pi \in \B(\ell) \setminus \Omega(\ell)} |\X (\Fq) \cap (\Pi \setminus \ell)| \\
&\le q+1 +(q^2 + q + 1 - r) (d-1)q +  r \big((d-1)q - (d-2) \big) \\
&=(d-1)q^3 + (d-1)q^2 + q + 1 + \big((d-1)q - r (d-2) \big).  
\end{align*}
To prove the assertion it is enough to show that $r (d-2) - (d-1)q > 0$. 

\emph{Subcase 1:} Let $d \le q-1$. We have $r \ge (q^2 + q + 1) - (q^2 - q - 2) = 2q + 3$, implying   
$$(d-2)r - (d-1)q \ge (d-2)(2q + 3) - (d-1)q = (d -3)q + 3 (d-2) > 0,$$
the last inequality follows since $d \ge 3$. 

\emph{Subcase 2:} Let $d = q$. Then $r \ge (q^2 + q + 1) - (q^2 - 1) = q+2$. Thus,
$$(d-2)r - (d-1)q \ge (q-2)(q + 2) - (q-1)q = q^2 - 4 - q^2 + q > 0,$$
this follows since $d \ge 3$ and $(d, q)\neq (3, 3)$. Furthermore, the strict inequality in subcase 2 is a consequence of the fact that $(d, q) \neq (4, 4)$. 

To deal with the case $(d, q) = (3, 3)$, we would need a better estimate. To this end, let $r = |\B(\ell) \setminus \Omega (\ell)|$ as above, and define $s = |\B(\ell) \setminus \S (\ell)|$, where $\S (\ell)$ is as in Remark \ref{3}. It turns out that, $s \ge q^2 + q + 1 - d (q + 1) + 1 = q^2 - (d-1)(q+1) + 1$. In particular, for $(d, q) = (3, 3)$, we have $r \ge 5$ and $s \ge 2$. We also note that $\Omega (\ell) \subset \S (\ell)$ and consequently, $\B (\ell) = \Omega (\ell) \sqcup (\S (\ell) \setminus \Omega (\ell)) \sqcup (\B (\ell) \setminus \S (\ell))$. We have,
\begin{align*}
& \ \ \ |\X(\Fq)| \\
&= |\ell (\Fq)| + \sum_{\Pi \in \B(\ell)} |\X (\Fq) \cap (\Pi \setminus \ell)| \\
&= |\ell (\Fq)| + \sum_{\Pi \in \Omega(\ell)} |\X (\Fq)\cap (\Pi \setminus \ell)| + \sum_{\Pi \in \S(\ell) \setminus \Omega(\ell)} |\X (\Fq) \cap (\Pi \setminus \ell)| + \sum_{\Pi \in \B(\ell) \setminus \S(\ell)} |\X (\Fq) \cap (\Pi \setminus \ell)|\\
&\le q+1 +(q^2 + q + 1 - r) 2q +  (r -s) (2q - 1 ) + s (q+1)\\
&=2q^3 + 2q^2 + q + 1 + \big( 2q (1 - r) + (r - s) (2q -1) + s (q+1) \big).  
\end{align*}
It is enough to prove that $2q (1 - r) + (r - s) (2q -1) + s (q+1) < 0$. But for $q = 3$, we have $2q (1 - r) + (r - s) (2q -1) + s (q+1) = 6 - (r + s) < 0$.
This completes the proof of the first assertion. The second assertion is follows from the proof of the first assertion.
\end{proof}

\begin{remark}\normalfont
 As we have observed, the upper bound in the Theorem \ref{main} is always attained by a nonsingular quadric threefold. It is well-known that a nonsingular Hermitian threefold also attains this bound (see \cite{BC} for more on Hermitian varieties in general). Further, if there is a plane curve $\C$ of degree $d$ not containing a line defined over $\Fq$ such that $|\C (\Fq)| = (d-1)q + 1$, then  $d = 2$ or $d \ge \sqrt{q} + 1$ (see \cite[Lemma 2.3]{HK4}). From the second assertion of Theorem \ref{main} it is now clear that the upper bound is attained by a nonsingular threefold of degree $d$ defined over $\Fq$ only if $d = 2$ or $d \ge \sqrt{q} + 1$.
\end{remark}

We conclude this article by comparing the upper bound obtained in Theorem \ref{main} to the upper bounds mentioned in Theorem \ref{SS}, Theorem \ref{elementary}, and in Theorem \ref{del}.

\begin{remark}\normalfont
We denote by $\mathscr{B} (d) := (d -1)q^3 + (d-1)q^2 + q + 1$. We also have,
$$ \mathscr{S} = dq^3 + q^2 + q + 1 \ \ \ \text{and} \ \ \ \mathscr{E} (d, 4) = (d-1)q^3 + dq^2 + q +1.$$ 
A direct comparison shows that $\mathscr{B} (d) < \mathscr{E} (d, 4) < \mathscr{S} (d, 4)$. As pointed out in Remark \ref{elt}, we have $\mathscr{E} (d, m) < \mathscr{W} (d, m) $ whenever $m \ge 3$ and $d \ge \sqrt{q} + 2$. In particular, this implies that the upper bound $\mathscr{B} (d)$ is better than $\mathscr{W} (d, 4)$ whenever $d \ge \sqrt{q} + 2$.
\end{remark}

\section*{Acknowledgment}
The author expresses his gratitude to the anonymous referee for their careful reading and relevant suggestions. Thanks are due to Peter Beelen for some discussions and comments on this article. 

\end{document}